\documentclass[a4paper]{article}
\usepackage{amsthm,amsmath,amssymb,vmargin}
\usepackage{comment}
\usepackage{tikz}
\usetikzlibrary{snakes}
\tikzstyle{empty}=[circle,draw=black!80,thick]
\tikzstyle{nero}=[circle,draw=black!80,fill=black!80,thick]
\usepackage{caption}
\title{\textbf{A new lower bound on the vertices of Specht modules for symmetric groups}}                    
\author{Eugenio Giannelli}


\newtheorem{teo1.2}{Theorem 1.2}[section]
\newtheorem{teo1.1}{Theorem 1.1}[section]
\newtheorem{teo}{Theorem}[section]
\newtheorem{theorem}{Theorem}

\newtheorem{lemma}[teo]{Lemma}

\newtheorem{osserv}[teo]{Remark}

\newtheorem{esem}[teo]{Example}

\theoremstyle{definition}

\begin{document}

\begin{abstract}
In this paper we study the vertices of indecomposable Specht modules for symmetric groups. For any given indecomposable non-projective Specht module,
the main theorem of the article describes a family of $p$-subgroups contained in its vertex. 
The theorem generalizes and improves an earlier result due to Wildon in \textit{Vertices of {S}pecht modules and blocks of the symmetric group,}
J. Alg 323 (2010) 2243--2256.
\end{abstract}

\maketitle

\section{Introduction}
One of the mainstream themes in the representation theory of finite groups has been to determine global information about the $p$-modular representation theory of a group $G$ by studying its local structure, namely representations of its $p$-subgroups and their normalizers.
An interesting topic is the investigation of the vertices in the sense defined by Green in \cite{Green}, of specific indecomposable modules such as simple modules. 
In the case of the symmetric group the study of the modular structure of Specht modules and simple modules is one of the important open problems in the area. These two families of modules are closely connected and the comprehension of properties of one of those often gave new results about the structure of the other. 

The vertices of Specht modules were first considered by Murphy and Peel in \cite{MurPeel}; their work focused on hook Specht modules in the case $p=2$. In \cite{MW1}, Wildon made some progress on the topic by characterizing the vertices of hook Specht modules for $\mathbb{F}S_n$ when $\mathbb{F}$ is a field of prime characteristic $p$ and $n$ is not divisible by $p$. Muller and Zimmermann described vertices and sources of some hook Specht and simple modules in 
\cite{MULZIM}. In \cite{KayJin} Lim gave a necessary condition for a Specht module to have an abelian vertex and characterized the possible abelian vertices of Specht modules. 
The vertices of irreducible Specht modules are completely described by the work of Hemmer \cite{hemmer} and Donkin \cite{Donkin}. In particular, in \cite{hemmer} it is shown that every irreducible Specht module is a signed Young module and in \cite{Donkin} a complete characterization of the vertices of signed Young modules is given.
Danz, K\"{u}lshammer and Zimmermann in \cite{DKZ}, characterized the vertices of all the simple modules of degree at most $14$ and $15$ in characteristic $2$ and $3$ respectively.
Wildon gave a general structural description of the vertices of all Specht modules; in \cite{MW2} he showed that a vertex of a non-projective Specht module contains some large $p$-subgroup of $S_n$.
In this work we will generalize and improve the lower bound on the vertex given in \cite[Theorem 1.1]{MW2} for Specht modules $S^\lambda$ defined over any field of prime characteristic $p$. In particular we will prove the following theorem, involving the concept of splitting partition and the definition of the group $L_{\mu}(t)$; these are the key ideas of the article and will be given in full detail in Section 3. 

\begin{theorem}\label{T1}
Let $n$ be any natural number and let $\mathbb{F}$ be a field of prime characteristic $p$. Let $\lambda$ be a partition of $n$, $\mu$ a splitting partition of $\lambda$ and $t$ a $\lambda$-tableau. If the Specht module $S^\lambda$ defined over $\mathbb{F}$ is indecomposable, then its vertex contains a subgroup conjugated to a Sylow $p$-subgroup of $L_{\mu}(t)$. 
\end{theorem}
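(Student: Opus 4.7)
The plan is to restrict $S^\lambda$ from $S_n$ to a Young subgroup encoded by the splitting partition $\mu$, extract an indecomposable direct summand of the restriction whose vertex I can pin down, and then transfer the information back to $S^\lambda$ via a standard vertex-lifting argument.

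First I would consider the restriction $S^\lambda \downarrow_{S_\mu}$, where $S_\mu = S_{\mu_1} \times \cdots \times S_{\mu_k}$ is the Young subgroup attached to $\mu$. By the Littlewood--Richardson rule, this restriction admits a filtration whose factors are outer tensor products of Specht modules $S^{\lambda^{(1)}} \boxtimes \cdots \boxtimes S^{\lambda^{(k)}}$ with $|\lambda^{(i)}| = \mu_i$. The key combinatorial role of the splitting hypothesis (to be introduced in Section 3) should be to ensure that, for the sequence of partitions $\lambda^{(i)}$ prescribed by the chosen tableau $t$, the corresponding outer tensor product occurs inside the restriction as a genuine direct summand, rather than merely as a filtration factor.

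Next I would identify the vertex of this summand. Because the vertex of an outer tensor product of indecomposable modules over a direct product of groups is, up to conjugacy, the product of the individual vertices, the computation reduces to studying each factor $S^{\lambda^{(i)}}$ separately. I expect the splitting hypothesis to force each $\lambda^{(i)}$ to be of a form whose Specht module has a known or inductively computable vertex -- for instance a hook, a single row, or a partition small enough to invoke Wildon's Theorem 1.1 of \cite{MW2}. The definition of $L_\mu(t)$ should be tailored precisely so that these individual vertices assemble into a Sylow $p$-subgroup of $L_\mu(t)$ sitting inside $S_\mu$.

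The final ingredient is the classical principle that, if $N$ is an indecomposable direct summand of $M \downarrow_H$ for an indecomposable $G$-module $M$, then any vertex of $N$ is $G$-subconjugate to a vertex of $M$; this follows from Mackey's formula together with the characterisation of a vertex via relative projectivity, and it is exactly where the indecomposability of $S^\lambda$ is used. The main obstacle I anticipate is the very first step: Specht filtrations generally do not split, so the splitting-partition condition must impose a strong combinatorial constraint forcing the prescribed filtration factor to split off as a summand -- for example through vanishing of the relevant $\operatorname{Ext}^1$ groups between adjacent factors, or through an explicit idempotent built from polytabloids that projects onto the distinguished summand. Establishing this splitting is what I expect to be the technical heart of the argument; once it is in place, the vertex computation and the lifting step are essentially formal.
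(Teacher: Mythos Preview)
Your proposal rests on guesses about the definitions that turn out to be incorrect, and once those are corrected the whole restriction-to-a-Young-subgroup strategy collapses. In the paper a \emph{splitting partition} $\mu$ is simply a terminal segment of $\lambda$: if $\lambda=(\lambda_1^{l_1},\ldots,\lambda_s^{l_s},\mu_1^{m_1},\ldots,\mu_k^{m_k})$ then $\mu=(\mu_1^{m_1},\ldots,\mu_k^{m_k})$, a partition of some number strictly smaller than $n$. The word ``splitting'' refers to cutting the Young diagram into an upper-right piece $u_\mu(t)$ and a lower piece $z_\mu(t)$; it has nothing to do with Specht filtrations splitting. The group $L_\mu(t)=U_\mu(t)\times Z_\mu(t)$ is not a Young subgroup at all: $U_\mu(t)\leq R(t)$ permutes the columns of $u_\mu(t)$ as blocks, and $Z_\mu(t)\leq C(t)$ permutes the rows of $z_\mu(t)$ as blocks. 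So there is no Young subgroup $S_\mu$ of $S_n$ to restrict to, no Littlewood--Richardson filtration in play, and no reason to expect any outer-tensor summand whose vertex assembles into a Sylow of $L_\mu(t)$. The step you yourself flag as the ``technical heart'' --- producing a genuine direct summand of the restriction --- is not only unproved but based on a misreading of what is being claimed.

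The paper's argument is entirely different and much more direct: it uses the Brauer construction. One first checks (Lemma~\ref{L1}) that the polytabloid $e_{t^\star}$ for the most dominant standard tableau $t^\star$ is fixed by any Sylow $p$-subgroup $P$ of $L_\mu(t^\star)$; this is a short computation using that elements of $U_\mu(t^\star)$ normalise $C(t^\star)$ and fix the tabloid $\{t^\star\}$, while elements of $P_Z\leq Z_\mu(t^\star)$ lie in $C(t^\star)$ and (for $p>2$) are even permutations. Then one shows $e_{t^\star}\notin\mathrm{Tr}^P(S^\lambda)$ by a dominance argument: any expression of $e_{t^\star}$ as a sum of orbit sums $\sum_{j=0}^{p-1}e_s g^j$ would force, via Theorem~\ref{Standard} and the fact that $P$ permutes the rows of $t^\star$ as blocks, that $s=t^\star$; but then the orbit sum is $pe_{t^\star}=0$. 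Hence $S^\lambda(P)\neq 0$, and Brou\'e's criterion gives that $P$ lies in a vertex. No restriction, no filtration, no $\mathrm{Ext}$ vanishing is needed.
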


The paper is structured as follows. In Section 2 we firstly recall the basic notions from modular representation theory of finite groups and we describe the Brauer homomorphism, this is our main tool to calculate vertices. In the second part of Section 2 we give a brief summary of the main properties of Specht modules for the symmetric group. Section 3 is mainly devoted to the proof of Theorem \ref{T1}. 
We show that Theorem \ref{T1} improves the bound given by Wildon in \cite{MW2} and we conclude by remarking that when $\lambda$ is an hook partition of a natural number $n$ not divisible by $p$, then the lower bound obtained from Theorem \ref{T1} is attained.

\section{Preliminaries}

\subsection{The Brauer homomorphism}
Let $G$ be a finite group and $\mathbb{F}$ a field of prime characteristic $p$. Let $H$ be a subgroup of $G$, an $\mathbb{F}G$-module $M$ is called \textit{$H$-projective} if there exists an $\mathbb{F}H$-module $N$ such that $M$ is a summand of the induced module $N\uparrow_H^G$. A \textit{vertex} $Q$ of an indecomposable $\mathbb{F}G$-module $M$ is a subgroup of $G$ that is minimal with respect to the condition that $M$ is relatively $Q$-projective. By \cite[page 435]{Green}, we have that the vertices of $M$ are $p$-groups and they are all conjugate in $G$. Moreover an indecomposable $\mathbb{F}Q$-module $N$ such that $M$ is a summand of $N\uparrow_Q^G$ is called a \textit{source} of $M$. By \cite[Page 66]{Alperin}, we have that the source of $M$ is unique up to conjugation in $N_G(Q)$.

Let now $V$ be an $\mathbb{F}G$-module.
Given a $p$-subgroup $Q\leq G$ we denote by $V^Q$ the set
$$V^Q=\{v\in V : vQ=v\}$$ of $Q$-fixed elements. It is easy to see that $V^Q$ is an 
$\mathbb{F}N_G(Q)$-module on which $Q$ acts trivially. 
For $R$ a proper subgroup of $Q$, the \emph{relative trace} map 
$\mathrm{Tr}_R^Q:V^R\rightarrow V^Q$ is the linear map defined by 
$$\mathrm{Tr}_R^Q(v)=\sum_g vg,$$
where the sum is over a set of right coset 
representatives for $R$ in $Q$. 
We observe that 
$$ \mathrm{Tr}^Q(V):=\sum_{R<Q}\mathrm{Tr}_R^Q(V^R) $$
is an $\mathbb{F}N_G(Q)$-module contained in $V^Q$. Moreover, for all $R<P<Q$ we have that $$\mathrm{Tr}_R^Q(V^R)=\mathrm{Tr}_P^Q(\mathrm{Tr}_R^P(V^R)).$$ 
Therefore $$ \mathrm{Tr}^Q(V)=\sum_{R\in\Omega_Q}\mathrm{Tr}_R^Q(V^R),$$
where $\Omega_Q$ is the set containing all maximal subgroups of $Q$.

The \emph{Brauer correspondent} of $V$ with respect to $Q$
is the $FN_G(Q)$-module $V(Q)$ defined by
$$V(Q) = V^Q / \sum_{R<Q}\mathrm{Tr}_R^Q(V^R). $$
For the scope of this paper it is very important to remark that if $V$ is an indecomposable $\mathbb{F}G$-module and $Q\leq_p G$, then $V(Q)\neq 0$ implies that $Q$ is contained in a vertex of $V$ (see \cite[(1.3)]{Broue}).

\subsection{The representation theory of $S_n$}
Let $\lambda=(\lambda_1,\ldots,\lambda_k)$ be a partition of a natural number $n$. The Young diagram of $\lambda$ is an array of boxes, left aligned, having $\lambda_j$ boxes in the $j^{th}$-row for all $j\in\{1,\ldots,k\}$. A $\lambda$-tableau is an assignment of the numbers $\{1,2,\ldots,n\}$ to the boxes of the Young diagram of $\lambda$ such that no number appears twice. 
The symmetric group $S_n$ acts naturally on the set of $\lambda$-tableaux by permuting the entries within the boxes.
We call \textit{row-standard} any $\lambda$-tableau having the entries of each row ordered increasingly from left to right. Similarly a $\lambda$-tableau is called \textit{column-standard} if the entries of each column are increasingly ordered from top to bottom. When a $\lambda$-tableau is both row-standard and column-standard is called \textit{standard}. Given a $\lambda$-tableau $v$ we will denote by $\overline{v}$ the row-standard tableau obtained from $v$ by sorting its rows in increasing order. We will call $\overline{v}$ the \textit{row-straightening of} $v$. 
There is a natural ordering on the set of standard $\lambda$-tableaux known as the \textit{dominance order} (see \cite[Section 3.1]{MW2}). Given two standard $\lambda$-tableaux $t$ and $u$ we write $t\triangleright u$ to express that $t$ dominates $u$.

We turn now to a brief account of the theory of Specht modules for $S_n$, we refer the reader to \cite{James} for further details and examples. 
We say that two $\lambda$-tableaux $t$ and $u$ are row-equivalent if the entries in each row of $t$ are the same as the entries in the corresponding row of $u$. It is easy to see that this defines an equivalence relation on the set of $\lambda$-tableaux. We will denote by $\{t\}$ the row-equivalence class of $t$ and we will say that $\{t\}$ is a \textit{$\lambda$-tabloid}. The symmetric group $S_n$ acts naturally on the set of $\lambda$-tabloids, 
therefore we can define $M^\lambda$ to be the $S_n$-permutation representation generated as a vector space by the set of all $\lambda$-tabloids.  
Given any $\lambda$-tableau $t$ we denote by $C(t)$ the column stabilizer of $t$, namely the subgroup of $S_n$ that fixes the columns of $t$ setwise. 
The $\lambda$-\textit{polytabloid} corresponding to the $\lambda$-tableau $t$ is the following element of $M^\lambda$: 
$$e_t=\sum_{g\in C(t)}\mathrm{sgn}(g)\{t\}g.$$
The Specht module $S^\lambda$ is the submodule of $M^\lambda$ linearly generated by the polytabloids. 
Notice that for all $h\in S_n$ we have that $e_th=e_{th}$ for any given $\lambda$-tableau $t$. Moreover if $g\in C(t)$ then it is easy to observe that 
$e_tg=\mathrm{sgn}(g)e_t$. Finally we will say that $e_t$ is a \textit{standard polytabloid} if $t$ is a standard $\lambda$-tableau. 
The main theorem about the structure of Specht modules is the following Standard Basis Theorem proved by James in \cite{James}. Here we will present it in its stronger version with the contribution made by Wildon in \cite[Proposition 4.1]{MW2}.
\begin{teo}\label{Standard}
The set of standard $\lambda$-polytabloids is a $\mathbb{Z}$-basis for the Specht module $S^\lambda$ defined over the ring of integer numbers. Moreover if $v$ is a column-standard $\lambda$-tableau then its row-straightening $\overline{v}$ is a standard $\lambda$-tableau and $$e_v=e_{\overline{v}}+x,$$
where $x$ is a $\mathbb{Z}$-linear combination of standard $\lambda$-polytabloids $e_t$ such that $\overline{v}\triangleright t$. 
\end{teo}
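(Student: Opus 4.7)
My plan is to organize the proof around two classical ingredients: the Garnir relations among polytabloids, and the leading-tabloid observation that for any $\lambda$-tableau $t$ the row-equivalence class $\{t\}$ is the dominance-maximum tabloid occurring in $e_t$, appearing with coefficient $+1$ from the identity term. For spanning, using $e_{tg}=\mathrm{sgn}(g)\,e_t$ for $g\in C(t)$ one reduces to the case where $t$ is column-standard. If such a $t$ is not yet row-standard, the Garnir relation applied to a suitable pair of adjacent columns---choose $q$ entries from the bottom of one column and $r$ from the top of the next with $q+r$ exceeding the column length---yields $\sum_\pi \mathrm{sgn}(\pi)\,e_{t\pi}=0$ as $\pi$ ranges over a transversal for $S_q\times S_r$ in $S_{q+r}$. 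Isolating the identity term and column-sorting each $t\pi$ writes $e_t$ as a $\mathbb{Z}$-linear combination of polytabloids strictly lower in a suitable well-founded order; iterating yields a standard polytabloid expansion of $e_t$.

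\textbf{Independence and the moreover clause.} For linear independence, suppose $\sum c_t\,e_t=0$ over standard tableaux and choose $t_0$ maximal in $\triangleright$ with $c_{t_0}\neq 0$. If $\{t_0\}$ occurred in some other $e_{t'}$ with $c_{t'}\neq 0$, the leading-tabloid fact would force $\{t_0\}\trianglelefteq\{t'\}$, which on standard tableaux translates to $t_0\trianglelefteq t'$, contradicting maximality; hence the coefficient of $\{t_0\}$ in the vanishing sum is $\pm c_{t_0}\neq 0$, impossible. For the moreover clause, let $v$ be column-standard. The elementary fact that termwise inequalities of two sequences are preserved under sorting, applied row by row to $v$, shows that $\overline{v}$ is standard. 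Both $e_v$ and $e_{\overline{v}}$ contain the tabloid $\{v\}=\{\overline{v}\}$ with coefficient $+1$ (from the identity in the respective column stabilizer, which is the only element preserving all row sets, since any nontrivial column permutation must move some entry to a different row), so $x := e_v - e_{\overline{v}}$ consists of tabloids strictly dominated by $\{\overline{v}\}$. Writing $x=\sum_t c_t\,e_t$ in the standard polytabloid basis and letting $t_{\max}$ be maximal in $\triangleright$ within the support, the leading-tabloid isolation forces $\{t_{\max}\}$ to appear in $x$ with coefficient $\pm c_{t_{\max}}$, hence $\{t_{\max}\}\triangleleft\{\overline{v}\}$; for every $t$ in the support one then gets $\{t\}\trianglelefteq\{t_{\max}\}\triangleleft\{\overline{v}\}$, which on standard tableaux translates to $\overline{v}\triangleright t$.

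\textbf{Main obstacle.} The delicate points are the two auxiliary dominance results: that $\{t\}$ is the dominance-maximum tabloid appearing in $e_t$---which amounts to showing that every nontrivial column permutation strictly decreases tabloid dominance---and that tabloid dominance restricted to standard tableaux agrees with the tableau dominance $\triangleright$. Both rely on careful row-by-row comparisons of the positions of the sets $\{1,\ldots,k\}$ under column permutations and row-sortings; verifying them cleanly is the heart of the argument. These refinements of James's original straightening procedure are precisely what is isolated in \cite[Proposition~4.1]{MW2}, and they constitute the content added to James's basis theorem by the moreover clause.
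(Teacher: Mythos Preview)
The paper does not give its own proof of this theorem: it is quoted as background, attributed to James \cite{James} for the Standard Basis Theorem and to Wildon \cite[Proposition~4.1]{MW2} for the ``moreover'' clause. Your sketch follows exactly the classical route taken in those references---Garnir relations for spanning, the leading-tabloid argument for independence, and the dominance refinement for the column-standard case---so there is nothing to compare beyond noting that you have reproduced the cited argument rather than a new one.

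One small imprecision worth flagging: you state the leading-tabloid property (``$\{t\}$ is the dominance-maximum tabloid occurring in $e_t$'') for \emph{any} $\lambda$-tableau $t$, but this is false in general (e.g.\ for $\lambda=(1,1)$ and $t$ with $2$ above $1$). The correct hypothesis is that $t$ be column-standard, which is precisely how you use it (for standard $t$ in the independence argument and for column-standard $v$ and standard $\overline{v}$ in the moreover clause), so the proof itself is unaffected. Also, in the final step your phrasing ``for every $t$ in the support one then gets $\{t\}\trianglelefteq\{t_{\max}\}$'' tacitly assumes a unique maximal element; since dominance is only a partial order you should instead argue that every element of the support lies below \emph{some} maximal element, and apply the isolation argument to each maximal element separately. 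These are cosmetic fixes; the argument is sound.
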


\subsection{Wildon's result}
As explained in the introduction, this paper is devoted to the generalization of Theorem 1.1 of \cite{MW2}. 

For any partition $\lambda$ of $n$ and any $\lambda$-tableau $t$  we will denote by $R(t)$ the subgroup of $S_n$ that stabilizes the rows of $t$ setwise.
We define $H(t)$ to be the subgroup of $R(t)$ which permutes, as blocks for its action, the columns of equal length in $t$.

For the reader convenience we restate below Theorem 1.1 of \cite{MW2}.

\begin{teo}\label{mw}
Let $\lambda$ be a partition of $n$ and let $t$ be a $\lambda$-tableau. If the Specht module $S^\lambda$, defined over a field of characteristic $p$, is indecomposable, then it has a vertex containing a subgroup isomorphic to a Sylow $p$-subgroup of $H(t)$.
\end{teo}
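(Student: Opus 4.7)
The plan is to apply the Brauer-correspondent criterion recalled at the end of Section 2.1: if $P$ denotes a Sylow $p$-subgroup of $H(t)$, it suffices to prove $S^\lambda(P)\neq 0$, since this forces $P$ to be contained in a vertex of $S^\lambda$. I would exhibit an explicit non-zero class in $S^\lambda(P)$ represented by the polytabloid $e_t$ itself, by verifying the two required properties in turn.

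For $P$-invariance I would actually prove the stronger statement that $e_t$ is fixed by every $\sigma\in H(t)$. Since $H(t)\leq R(t)$, the tabloid $\{t\}$ is fixed by $\sigma$, so $\{t\sigma\}=\{t\}$. Because $\sigma$ permutes the columns of $t$ among columns of equal length as blocks, the set of columns of $t\sigma$ (viewed as subsets of $\{1,\dots,n\}$) coincides with that of $t$, which forces the equality $C(t\sigma)=C(t)$ of subgroups of $S_n$. Combining these two identifications with the basic identity $e_t\sigma=e_{t\sigma}$ yields $e_t\sigma=e_t$, and in particular $e_t\in (S^\lambda)^P$.

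To show that the class of $e_t$ in $S^\lambda(P)$ is non-zero, I would track the coefficient of the tabloid $\{t\}$ as a linear functional on $M^\lambda$. In $e_t$ this coefficient equals $1$, since $C(t)\cap R(t)=\{1\}$. I would then check that this coefficient vanishes on every element of $\mathrm{Tr}^P(S^\lambda)$. Indeed, for $R<P$ and $v\in (S^\lambda)^R$, writing $\mathrm{Tr}_R^P(v)=\sum_g vg$ over right coset representatives of $R$ in $P$, the coefficient of $\{t\}$ in $vg$ equals the coefficient of $\{tg^{-1}\}$ in $v$. Since $P\leq R(t)$, every such $g^{-1}$ fixes the tabloid $\{t\}$, so each summand contributes the same quantity $c_{\{t\}}$ (the coefficient of $\{t\}$ in $v$). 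The total coefficient of $\{t\}$ in $\mathrm{Tr}_R^P(v)$ is therefore $|P:R|\cdot c_{\{t\}}$, and $|P:R|$ is a positive power of $p$, hence zero in $\mathbb{F}$.

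Combining the two steps, $e_t+\mathrm{Tr}^P(S^\lambda)$ is a non-zero element of $S^\lambda(P)$, so the Brauer criterion delivers the conclusion. The only non-formal point is the identification $C(t\sigma)=C(t)$ in the second paragraph, which uses in an essential way the ``as blocks'' clause in the definition of $H(t)$; without this condition $e_t$ would fail to be $H(t)$-invariant, and the whole strategy would collapse.
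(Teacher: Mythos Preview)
Your proof is correct. The invariance step coincides with the paper's argument (this is exactly the $U_\mu$-part of Lemma~3.1 specialised to $\mu=\emptyset$, where $U_\emptyset(t)=H(t)$). The second step, however, is genuinely different from what the paper does.

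In the paper the statement is obtained as the special case $\mu=\emptyset$ of Theorem~A, whose proof shows $e_{t^\star}\notin\mathrm{Tr}^P(S^\lambda)$ by an argument inside the standard polytabloid basis: one assumes $e_{t^\star}$ lies in the span of sums $e_s+e_sg+\cdots+e_sg^{p-1}$, uses Theorem~2.1 and the maximality of $t^\star$ in the dominance order to force $s=t^\star$, and then observes that this sum collapses to $pe_{t^\star}=0$. Your route bypasses the standard basis entirely by reading off the coefficient of the single tabloid $\{t\}$ in $M^\lambda$; because $P\leq H(t)\leq R(t)$ fixes $\{t\}$, this functional kills every relative trace. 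This is shorter, works for an arbitrary $\lambda$-tableau $t$ rather than the distinguished $t^\star$, and needs neither Theorem~2.1 nor the dominance order. The price is that your argument does \emph{not} extend to the paper's main Theorem~A: there $P\leq L_\mu(t)=U_\mu(t)\times Z_\mu(t)$ with $Z_\mu(t)\leq C(t)$, so elements of $P$ need not fix $\{t\}$ and the tabloid-coefficient functional is no longer constant along $P$-cosets. The paper's dominance argument, by contrast, only uses that $P$ permutes the rows of $t^\star$ as blocks, which still holds for $L_\mu(t^\star)$.
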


\vspace{.2cm}

For example, if $\lambda=(5,5,2,2,2,2)$ and $t$ is the $\lambda$-tableau shown in Figure \ref{fig:a} below,

\begin{figure}[!htp]
\begin{center}

\begin{tikzpicture}[scale=0.3, every node/.style={transform shape}]
\tikzstyle{every node}=[font=\large]

\draw (0,12)--(10,12);
\draw (0,10)--(10,10);
\draw (0,8)--(10,8);
\draw (0,6)--(4,6);
\draw (0,4)--(4,4);
\draw (0,2)--(4,2);
\draw (0,0)--(4,0);
\draw (0,12)--(0,0);
\draw (2,12)--(2,0);
\draw (4,12)--(4,0);
\draw (6,12)--(6,8);
\draw (8,12)--(8,8);
\draw (10,12)--(10,8);
\node at (1,11) {1};
\node at (1,9) {6};
\node at (1,7) {11};
\node at (1,5) {13};
\node at (1,3) {15};
\node at (1,1) {17};
\node at (3,11) {2};
\node at (3,9) {7};
\node at (3,7) {12};
\node at (3,5) {14};
\node at (3,3) {16};
\node at (3,1) {18};
\node at (5,11) {3};
\node at (5,9) {8};
\node at (7,11) {4};
\node at (7,9) {9};
\node at (9,11) {5};
\node at (9,9) {10};
\node at (-2,6){$t=$};

\end{tikzpicture}
\end{center}
\caption{}
\label{fig:a}
\end{figure}
\vspace{.2cm}

\noindent then $R(t)=S_{\{1,2,3,4,5\}}\times S_{\{6,7,8,9,10\}}\times S_{\{11,12\}}\times S_{\{13,14\}}\times S_{\{15,16\}}\times S_{\{17,18\}}$ and $H(t)$ is the subgroup generated by the permutations $$(3,4,5)(8,9,10) , (3,4)(8,9)\ \text{and}\ (1,2)(6,7)(11,12)(13,14)(15,16)(17,18).$$
Considered as an abstract group we have that $H(t)\cong S_3\times S_2$.

\section{A New Lower Bound}
The main goal of this section is to prove Theorem \ref{T1}.
 In order to do this, we need now to introduce some new notation and definitions. 
Let $\lambda$ be a partition of $n$ of the form $$\lambda=(\lambda_1^{l_1},\ldots, \lambda_s^{l_s},\mu_1^{m_1},\ldots,\mu_k^{m_k})$$
with $\lambda_i>\lambda_{i+1}$ for all $i\in\{1,\ldots,s\}$, $\mu_j>\mu_{j+1}$ for all $j\in\{1,\ldots,k-1\}$ and with $\lambda_s>\mu_1$. We will call $$\mu=(\mu_1^{m_1},\ldots,\mu_k^{m_k})$$ a \textit{splitting partition} of $\lambda$. Let $\eta$ be the partition defined by $\eta=((\lambda_1-\mu_1)^{l_1},\ldots,(\lambda_s-\mu_1)^{l_s}).$ 

Given any $\lambda$-tableau $t$, we introduce below some definitions that will be very important in order to prove Theorem \ref{T1}.

\begin{itemize}
\item Denote by $z_{\mu}(t)$ the $\mu$-tableau formed by the last $m_1+m_2+\cdots +m_k$ rows of $t$. Clearly $\mathrm{supp}(z_{\mu}(t))\subseteq\{1,2,\ldots,n\}$. 

\item Let $Z_{\mu}(t)$ be the subgroup of $C(t)$ that fixes all the numbers outside the support of $z_{\mu}(t)$ and permutes the rows of equal length of $z_{\mu}(t)$ as blocks for its action. 
Notice that $Z_{\mu}(t)$ is isomorphic as an abstract group to $S_{m_1}\times S_{m_2}\times\cdots\times S_{m_k}$. 

\item Denote by $u_{\mu}(t)$ the $\eta$-tableau formed by the last $\lambda_1-\mu_1$ columns of $t$.

\item Let $U_{\mu}(t)$ be the subgroup of $R(t)$ fixing all the numbers outside the support of $u_{\mu}(t)$ and permuting the columns of $u_{\mu}(t)$ as blocks for its action. Notice that $U_{\mu}(t)$ is a subgroup of $H(t)$. 

\item Finally, denote by $L_{\mu}(t)$ the group $U_{\mu}(t)\times Z_{\mu}(t)$. 

\end{itemize}
\begin{esem}
Let $\lambda=(5,5,2,2,2,2)$ and let $t$ be the $\lambda$-tableau shown in Figure \ref{fig:a} then choosing the splitting partition $\mu$ to be $\mu=(2,2,2,2)$, we have that $U_\mu(t)$ and $Z_\mu(t)$ are the subgroups of $S_{18}$ defined by 
$$U_\mu(t)=\left\langle(3,4,5)(8,9,10) , (3,4)(8,9)\right\rangle$$ and 
$$Z_\mu(t)=\left\langle(11,13,15,17)(12,14,16,18) , (11,13)(12,14)\right\rangle.$$
In particular we have that, as an abstract group, $L_\mu(t)\cong S_3\times S_4$.
\end{esem}

We observe that, by construction, $L_\mu(t)$ permutes both rows and columns of $t$ as blocks for its action. Notice also that $L_\mu(t)$ depends on the initial choice of $\mu$. In general $L_\mu(t)$ and $L_\nu(t)$ are non-isomorphic if $\mu$ and $\nu$ are different splitting partitions of $\lambda$ (see for instance Example \ref{ZZZ}).

 On the other hand, given $\mu$ a splitting partition of $\lambda$ and $u$, $t$ two $\lambda$-tableaux, we have that $L_\mu(t)$ is a conjugate of $L_\mu(u)$ in $S_n$. 

The following lemma is a fundamental step towards the proof of Theorem \ref{T1}. 

\begin{lemma}\label{L1}
Let $\lambda$ be a partition of $n$, $\mu$ a splitting partition of $\lambda$ and $t$ a $\lambda$-tableau. Let $P$ be a Sylow $p$-subgroup of $L_{\mu}(t)$. Then $$e_ty=e_t$$ for all $y\in P$.
\end{lemma}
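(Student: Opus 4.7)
The plan is to exploit the direct-product decomposition $L_\mu(t)=U_\mu(t)\times Z_\mu(t)$ and show that each factor already acts trivially on $e_t$. Since the supports of $u_\mu(t)$ and $z_\mu(t)$ are disjoint (the former sits in the top $l_1+\cdots+l_s$ rows of $t$, the latter in the bottom $m_1+\cdots+m_k$), $U_\mu(t)$ and $Z_\mu(t)$ commute in $S_n$, and a Sylow $p$-subgroup of $L_\mu(t)$ therefore factors as $P=P_U\times P_Z$ where $P_U\le U_\mu(t)$ and $P_Z\le Z_\mu(t)$ are Sylow $p$-subgroups. It is enough to prove $e_tu=e_t$ for every $u\in U_\mu(t)$ and $e_tz=e_t$ for every $z\in P_Z$; the lemma then follows by writing $y=uz$ and computing $e_ty=(e_tu)z=e_tz=e_t$.

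For the $U_\mu(t)$ factor I would use that $U_\mu(t)\le R(t)\cap H(t)$. The inclusion in $R(t)$ yields the tabloid equality $\{tu\}=\{t\}$. The inclusion in $H(t)$ gives the crucial normalization step: since $u$ permutes as blocks only columns of $t$ of equal length, conjugation by $u$ permutes the corresponding factors $S_c$ of $C(t)=\prod_c S_c$ among themselves and fixes the remaining factors, so $u\in N_{S_n}(C(t))$ and $C(tu)=u^{-1}C(t)u=C(t)$. Combining these with the identity $e_th=e_{th}$ recalled in Section~2,
\[
e_tu=e_{tu}=\sum_{g\in C(tu)}\mathrm{sgn}(g)\,\{tu\}\,g=\sum_{g\in C(t)}\mathrm{sgn}(g)\,\{t\}\,g=e_t.
\]

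For the $Z_\mu(t)$ factor, by construction $Z_\mu(t)\le C(t)$ (swapping two rows of $z_\mu(t)$ of common length $\mu_i$ is a product of $\mu_i$ transpositions, each within a single column of $t$), so the standard alternating property of polytabloids gives $e_tz=\mathrm{sgn}(z)\,e_t$ for $z\in Z_\mu(t)$. Now $z\in P_Z$ is a $p$-element of $S_n$. If $p$ is odd, every cycle of $z$ has odd length, so $\mathrm{sgn}(z)=+1$. If $p=2$, then $-1=1$ in the ground field $\mathbb{F}$ and again $\mathrm{sgn}(z)\,e_t=e_t$. In both cases $e_tz=e_t$, completing the argument.

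I expect the main delicate point to be the normalization statement $U_\mu(t)\le N_{S_n}(C(t))$: this is exactly where the ``equal length'' requirement built into the definition of $H(t)$, and inherited by $U_\mu(t)$, is essential, since otherwise conjugation could send a column-stabilizer factor $S_c$ outside $C(t)$. Once that is in place, the rest is bookkeeping with the defining formula $e_t=\sum_{g\in C(t)}\mathrm{sgn}(g)\,\{t\}\,g$ and with the characteristic-$p$ sign dichotomy.
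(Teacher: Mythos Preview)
Your proof is correct and follows essentially the same route as the paper: factor $P=P_U\times P_Z$, show that elements of $U_\mu(t)$ fix $e_t$ because they normalize $C(t)$ and stabilize the tabloid $\{t\}$, and show that $p$-elements of $Z_\mu(t)\le C(t)$ fix $e_t$ via the sign dichotomy ($p=2$ versus $p$ odd). The only cosmetic difference is that the paper computes $e_th$ directly from the defining sum using $C(t)^h=C(t)$, whereas you pass through $e_tu=e_{tu}$ and $C(tu)=C(t)$; these are the same computation.
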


\begin{proof}

Since $P$ is a Sylow $p$-subgroup of $L_{\mu}(t)=U_{\mu}(t)\times Z_{\mu}(t)$, there exist $P_U$ and $P_Z$ Sylow $p$-subgroups of $U_{\mu}(t)$ and $Z_{\mu}(t)$ respectively such that $P=P_U\times P_Z$. Therefore for any element $y\in P$ there exist unique $h\in P_U$ and $k\in P_Z$ such that $y=hk$. 
Hence it suffices to prove that $e_th=e_t$ for all $h\in P_U$ and $e_tk=e_t$ for all $k\in P_Z$. 

Let $h\in P_U\leq U_{\mu}(t)\leq R(t)$. By definition $h$ permutes the columns of $t$ as blocks for its action, therefore $C(t)^h=C(t)$ and of course 
$\{t\}h=\{t\}$. Hence $$e_th=\sum_{g\in C(t)}\mathrm{sgn}(g)\{t\}gh =\sum_{x\in C(t)^h}\mathrm{sgn}(x)\{t\}hx =\sum_{x\in C(t)}\mathrm{sgn}(x)\{t\}x =e_t.$$
Let $k\in P_Z$. If the characteristic $p$ of the underlying field $\mathbb{F}$ is $2$ then clearly $e_tk=e_t$ since $k\in C(t)$. On the other hand, if the prime characteristic $p>2$ then by definition $P_Z\leq C(t)\cap A_n$, hence we have again that $e_tk=e_t$, as required.
\end{proof}

To proceed with the proof of Theorem \ref{T1} we will denote by $t^\star$ the greatest standard $\lambda$-tableau in the dominance order. In particular
if $\lambda=(\rho_1,\ldots,\rho_k)$, we have that the entries in the $i^{th}$ row of $t^\star$ are 
$$R_i=\{\rho_1+\cdots +\rho_{i-1}+1,\ldots,\rho_1+\cdots +\rho_{i}\},$$
for all $i\in\{1,2,\ldots,k\}$. For example, the standard tableau shown in Figure \ref{fig:a} is the most dominant $(5^2,2^4)$-tableau. 

\begin{proof}[Proof of Theorem~\ref{T1}]

Since the subgroups $L_\mu(t)$ for different tableau $t$ are all conjugate in $S_n$, without loss of generality, it suffices to prove that a Sylow $p$-subgroup of $L_\mu(t^\star)$ is contained in a vertex of $S^\lambda$. Let $P$ be a Sylow $p$-subgroup of $L_\mu(t^\star)$. Lemma \ref{L1} implies that $e_{t^\star}\in (S^\lambda)^P$.
In order to complete the proof we just need to show that $e_{t^\star}\notin \mathrm{Tr}^P(S^\lambda)$.
 Consider $V$ to be the subspace of $S^\lambda$ generated by all the elements of the form $$e_s+e_sg+\cdots +e_sg^{p-1},$$
where $s$ is any standard $\lambda$-tableau and $g$ is any element of $P$.
Since any maximal subgroup of $P$ has index $p$ in $P$, we have that
$\mathrm{Tr}^P(S^\lambda)\leq V$, therefore it will suffice to show that $e_{t^\star}\notin V$.
Suppose by contradiction that $$e_{t^\star}=\sum_{s, g}a_{s,g}(e_s+\cdots +e_sg^{p-1}),\ \ \text{for some $a_{s,g}\in\mathbb{F}$}.$$
Then there exist a standard tableau $s$ such that, when $e_s+\cdots +e_sg^{p-1}$ is expressed as a linear combination of standard polytabloids, $e_{t^\star}$ appears with non zero coefficient. This implies that exists $i\in \{0,1,\ldots,p-1\}$ such that $e_{t^\star}$ appears in the expression of $e_sg^i$. Let $u$ be the column-standard tableau whose columns are setwise equal to the columns of $sg^i$. 
Clearly $e_{sg^i}=\pm e_u$ and
by Theorem \ref{Standard} we have that $e_u=e_{\overline{u}}+x$, where $\overline{u}$ is the row-straightening of $u$ and $x$ is a linear combination of standard polytabloids $e_v$ with $v\triangleleft \overline{u}\trianglelefteq t^\star$. We deduce that $t^\star=\overline{u}$ because 
$t^\star$ is the greatest standard tableau in the dominance order and the standard polytabloids are linearly independent by Theorem \ref{Standard}.

 Observe that if $a,b\in\{1,2,\ldots,n\}$ are in the same row of $t^\star$ then they lie in the same row of $u$ and since the columns of $u$ agree setwise with the columns of $sg^i$ we obtain that necessarily $a$ and $b$ lie in different columns of $sg^i$.
Suppose now by contradiction that there exist $a, b\in\{1,2,\ldots,n\}$ lying in the same row $R_j$ of $t^\star$ and lying also in the same column of $s$. 
Since $g^i\in P\leq L_{\mu}(t^\star)$ permutes the rows of $t^\star$ as blocks for its action, we have that $ag^i, bg^i$ belong to the same row $R_jg^i$ of $t^\star$, and clearly we obtain a contradiction because $ag^i, bg^i$ would then lie in the same column of $sg^i$. 
We just proved that no two numbers in the same row of $t^\star$ can possibly lie in the same column of $s$. Since $s$ is standard, we obtain that $s=t^\star$. Therefore $e_s=e_{t^\star}$ and by Lemma \ref{L1} $e_{t^\star}g=e_{t^\star}$, and so $$e_{t^\star}+e_{t^\star}g+\cdots +e_{t^\star}g^{p-1}=pe_{t^\star}=0.$$
This contradicts our initial assumption. Therefore $e_{t^\star}\notin V$, as required. 
We just proved that $S^\lambda(P)\neq 0$ and therefore we have that $P$ is contained in a vertex of $S^\lambda$. 
\end{proof}

Theorem \ref{T1} generalizes Theorem \ref{mw}. In particular we observe that given $\lambda$ a partition of $n$ such that $S^\lambda$ is an indecomposable Specht module, then choosing the splitting partition $\mu$ to be the empty partition, we have that
$$L_\emptyset(t)= H(t),$$
therefore we obtain Theorem \ref{mw} as a corollary of Theorem \ref{T1}.
Moreover, it is easy to realize that for a large number of partitions $\lambda$ of $n$ there exists a splitting partition $\mu\neq\emptyset$ such that 
$H(t)$ is isomorphic to a proper subgroup of $L_\mu(t)$.
In all these cases our Theorem \ref{T1} strictly improves the lower bound on a vertex of $S^\lambda$ given by 
Wildon. 
One explicit example is $\lambda=(5,5,2,2,2,2)$, $\mu=(2,2,2,2)$ and $t$ the $\lambda$-tableau shown in Figure \ref{fig:a}.

%
%
%

In the following remark we show that in the case of hook-partitions, our Theorem gives a complete description of the vertices of the correspondent Specht modules.

\begin{osserv}
Let $\lambda=(n-k,1^k)$ be an hook partition of a natural number $n$ such that $p$ does not divide $n$. If we choose the splitting partition $\mu$ to be equal to $(1^k)$ then the lower bound on the vertex of $S^\lambda$ obtained from Theorem \ref{T1} is attained. In fact in such case we have 
$$L_\mu(t)\cong S_k\times S_{n-k-1}.$$
By \cite{MW1} we have that the vertex of $S^\lambda$ is isomorphic to a Sylow $p$-subgroup of $S_k\times S_{n-k-1}$.
\end{osserv}

In general it will be very important to analyse the properties of the whole set of subgroups $L_\mu(t)$ for any $\mu$ splitting partition of $\lambda$. 
As shown in the example below, this will allow us to deduce more precise information about the structure of the vertices of $S^\lambda$.

\begin{esem}\label{ZZZ}
Let $p=3$, $n=36$ and $\lambda=(6^3,3^6)$. Denote by $Q$ a vertex of $S^\lambda$ and let $t$ be the $\lambda$-tableau shown in Figure 2 below.
Applying Theorem \ref{T1} with respect to the splitting partition $\mu=(3^6)$ of $\lambda$, we obtain that $Q$ contains a subgroup isomorphic to the elementary abelian $3$-group of order $27$. In particular, a Sylow $3$-subgroup of $L_\mu(t)$ is generated by the permutations $$\sigma:=(4,5,6)(10,11,12)(16,17,18) ,\ \rho:=(19,22,25)(20,23,26)(21,24,27)$$ $$\text{and}\ \tau:=(28,31,34)(29,32,35)(30,33,36).$$
Without loss of generality we can suppose that the group $R:=\left\langle\sigma,\rho,\tau\right\rangle$ is contained in $Q$. 

In this case we can say something even more precise. 
In fact, we observe that a Sylow $3$-subgroup of $L_\emptyset (t)\cong S_3\times S_3$ has support of full size equal to $36$. 
In particular it contains the element $\gamma$ defined by $$\gamma:=(1,2,3)(7,8,9)(13,14,15)(19,20,21)\cdots (34,35,36)(4,5,6)(10,11,12)(16,17,18).$$
Hence a conjugate $\tilde{\gamma}$ of $\gamma$ is contained in $Q$.
Since $\tilde\gamma$ has support of size $36$ it follows that $\left\langle R,\tilde\gamma\right\rangle$ is a subgroup of $Q$, strictly containing $R$, of order $81$.

\end{esem}

\begin{figure}[!htp]
\begin{center}

\begin{tikzpicture}[scale=0.3, every node/.style={transform shape}]
\tikzstyle{every node}=[font=\large]

\draw (0,14)--(12,14);
\draw (0,12)--(12,12);
\draw (0,10)--(12,10);
\draw (0,8)--(12,8);
\draw (0,6)--(6,6);
\draw (0,4)--(6,4);
\draw (0,2)--(6,2);
\draw (0,0)--(6,0);
\draw (0,-2)--(6,-2);
\draw (0,-4)--(6,-4);
\draw (0,14)--(0,-4);
\draw (2,14)--(2,-4);
\draw (4,14)--(4,-4);
\draw (12,14)--(12,8);
\draw (6,14)--(6,-4);
\draw (6,14)--(6,8);
\draw (8,14)--(8,8);
\draw (10,14)--(10,8);
\node at (1,13) {1};
\node at (1,11) {7};
\node at (1,9) {13};
\node at (1,7) {19};
\node at (1,5) {22};
\node at (1,3) {25};
\node at (1,1) {28};
\node at (1,-1) {31};
\node at (1,-3) {34};
\node at (3,13) {2};
\node at (3,11) {8};
\node at (3,9) {14};
\node at (3,7) {20};
\node at (3,5) {23};
\node at (3,3) {26};
\node at (3,1) {29};
\node at (3,-1) {32};
\node at (3,-3) {35};
\node at (5,13) {3};
\node at (5,11) {9};
\node at (5,9) {15};
\node at (5,7) {21};
\node at (5,5) {24};
\node at (5,3) {27};
\node at (5,1) {30};
\node at (5,-1) {33};
\node at (5,-3) {36};
\node at (7,13) {4};
\node at (7,11) {10};
\node at (7,9) {16};
\node at (9,13) {5};
\node at (9,11) {11};
\node at (9,9) {17};
\node at (11,13) {6};
\node at (11,11) {12};
\node at (11,9) {18};

\node at (-2,6){$t=$};

\end{tikzpicture}
\end{center}
\caption{}
\label{fig:b}
\end{figure}
\vspace{.2cm}

\medskip

\section*{Acknowledgements}
I would like to thank my PhD supervisor Dr.~ Mark Wildon for his constant support and his helpful comments.

\def\cprime{$'$} \def\Dbar{\leavevmode\lower.6ex\hbox to 0pt{\hskip-.23ex
  \accent"16\hss}D}
\providecommand{\bysame}{\leavevmode\hbox to3em{\hrulefill}\thinspace}
\providecommand{\MR}{\relax\ifhmode\unskip\space\fi MR }
\providecommand{\MRhref}[2]{%
  \href{http://www.ams.org/mathscinet-getitem?mr=#1}{#2}
}
\providecommand{\href}[2]{#2}
\renewcommand{\MR}[1]{\relax}


\begin{thebibliography}{10}


\bibitem{Alperin}
{\sc Alperin, J.~L.}
\newblock Local representation theory.
\newblock {\em  Cambridge studies in
  advanced mathematics,\/} vol.~11, CUP, 1986.

\bibitem{Broue}
{\sc Brou{\'{e}}, M.} 
\newblock On {S}cott modules and {$p$}-permutation modules: an approach through the {B}rauer homomorphism. 
\newblock {\em Proc.~Amer.~Math.~Soc.} 93 (1985) 401--408.

\bibitem{DKZ}
{\sc Danz, S., K\"{u}lshammer, B. and Zimmermann, R.}
\newblock On vertices of simple modules for symmetric groups of small degrees.
\newblock {\em J. Algebra\/} 320 (2008) 680--707.

\bibitem{Donkin}
{\sc Donkin, S.}
\newblock Symmetric and exterior powers, linear source modules and representations of Schur superalgebras.
\newblock {\em Proc. London Math. Soc. \/} 83 (3) (2001) 647–680.

\bibitem{Green}
{\sc Green, J.~A.}
\newblock On the indecomposable representations of a finite group.
\newblock {\em Math. Zeitschrift 70\/} (1958/59), 430--445.

\bibitem{hemmer}
{\sc Hemmer, D.J.}
\newblock Irreducible Specht modules are signed Young modules.
\newblock {\em J. Algebra\/} 305 (2006) 433–441.

\bibitem{James}
{\sc James, G.~D.}
\newblock { The representation theory of the symmetric groups}. 
\newblock  {\em Lecture Notes in Mathematics}, vol.~682 Springer-Verlag, 1978.

\bibitem{JK}
{\sc James, G.~D., and Kerber, A.}
\newblock {The representation theory of the symmetric group}.
\newblock {\em Encyclopedia of Mathematics and its Applications}, vol.~16, Addison-Wesley Publishing Co., Reading, Mass., 1981.

\bibitem{KayJin}
{\sc Lim, K.J.}
\newblock Specht modules with abelian vertices.
\newblock {\em J. Algebr. Comb.} 35 (2012), 157--171.


\bibitem{MULZIM}
{\sc M{\"u}ller, J., and Zimmermann, R.}
\newblock Green vertices and sources of simple modules of the symmetric group labelled by hook partitions.
\newblock {\em Arch. Math. (Basel)}, 89 (2007) 97-108.

\bibitem{MurPeel}
{\sc Murphy, G.~M., and Peel, M.~H.}
\newblock Vertices of {S}pecht modules.
\newblock {\em J.~Algebra 86\/} (1984), 85--97.

\bibitem{MW1}
{\sc Wildon, M.}
\newblock Two theorems on the vertices of {S}pecht modules.
\newblock {\em Arch. Math. (Basel) 81}, 5 (2003), 505--511.


\bibitem{MW2}
{\sc Wildon, M.}
\newblock Vertices of {S}pecht modules and blocks of the symmetric group.
\newblock {\em J. Algebra }, 323 (2010), 2243--2256.

  


\end{thebibliography}
\end{document}